\newtheorem{thm}{Theorem}[section]
\newtheorem{cor}{Corollary}[section]
\newtheorem{lem}[thm]{Lemma}
\theoremstyle{definition}
\theoremstyle{Conjecture}
\theoremstyle{remark}
\newtheorem{rem}{Remark}[section]
\theoremstyle{Example}
\newcommand{\be}{\begin{equation}}
\newcommand{\ee}{\end{equation}}
\newcommand{\bea}{\begin{eqnarray}}
\newcommand{\eea}{\end{eqnarray}}
\newcommand{\ben}{\begin{eqnarray*}}
\newcommand{\een}{\end{eqnarray*}}
\newcommand{\bet}{\begin{equation}
\begin{split}}
\newcommand{\eet}{\end{split}
\end{equation}}
\begin{document}
\title[Integration estimation related to Strong Openness Conjecture]
{Estimation of weighted $L^2$ norm related to Demailly's Strong Openness Conjecture}

\author{Qi'an Guan}
\address{Qi'an Guan: School of Mathematical Sciences, and Beijing International Center for Mathematical Research,
Peking University, Beijing, 100871, China.}
\email{guanqian@amss.ac.cn}
\author{Zhenqian Li}
\address{Zhenqian Li: School of Mathematical Sciences, Peking University, Beijing, 100871, China.}
\email{lizhenqian@amss.ac.cn}
\author{Xiangyu Zhou}
\address{Xiangyu Zhou: Institute of Mathematics, AMSS, and Hua Loo-Keng Key Laboratory of Mathematics, Chinese Academy of Sciences, Beijing, 100190, China}
\email{xyzhou@math.ac.cn}

\thanks{The authors were partially supported by NSFC-11431013. The third author would like to thank NTNU for offering him Onsager Professorship.
The first author was partially supported by NSFC-11522101.}

\date{\today}
\subjclass[2010]{32C35, 32L10, 32U05, 32W05}
\thanks{\emph{Key words}. Strong openness conjecture, Effectiveness, Plurisubharmonic function, Multiplier ideal
       sheaf}

\begin{abstract}
In the present article, we obtain an estimation of the weighted $L^2$ norm near the singularities of plurisubharmonic weight related to Demailly's strong openness conjecture, which implies the convergence of the weighted $L^2$ norm.
\end{abstract}
\maketitle

\section{introduction}\label{sec:introduction}
Let $D\subset\mathbb{C}^n$ be a bounded pseudoconvex domain, $o\in D$ the origin of $\mathbb{C}^n$ and $\varphi\in Psh(D)$ a plurisubharmonic function on $D$. The multiplier ideal sheaf $\mathscr{I}(\varphi)$ consists of germs of holomorphic functions $f$ such that $|f|^2e^{-\varphi}$ is locally integrable, which is a coherent sheaf of ideals (see \cite{De}).

\textbf{Demailly's strong openness conjecture (SOC) \cite{De_ICTP}:} If $(f,o)\in\mathscr{I}(\varphi)_o$, then there exists $\varepsilon>0$ such that $(f,o)\in\mathscr{I}((1+\varepsilon)\varphi)_o$.

Note that $\mathscr{I}(\varphi)_o$ is finitely generated by $(f_j)_{j=1,...,k_0}$. Let $\mathscr{I}(\varphi)_o=(f_1,...,f_{k_0})$. The truth of SOC implies that there exists $\varepsilon_j>0$ such that $(f_j,o)\in\mathscr{I}((1+\varepsilon_j)\varphi)_o$ for any $1\leq j\leq k_0$. Then, SOC is equivalent to $\mathscr{I}(\varphi)_o=\mathscr{I}((1+\varepsilon_0)\varphi)_o$, where $\varepsilon_0=\min\limits_{1\leq j\leq k_0}\{\varepsilon_j\}$.
As $\mathscr{I}((1+\varepsilon_0)\varphi)_o\subset\mathscr{I}_+(\varphi)_o:=\cup_{\varepsilon>0}
\mathscr{I}((1+\varepsilon)\varphi)_o\subset\mathscr{I}(\varphi)_o$, then SOC is equivalent to
$\mathscr{I}(\varphi)_o=\mathscr{I}_+(\varphi)_o$.

In \cite{G-Z_open}, Guan and Zhou proved the above SOC. Moreover, they also established an effectiveness about $\varepsilon$ of the conjecture in \cite{G-Z_effect}.

Let $L^2_{\mathcal{O}}(D)$ be the Hilbert space of homomorphic functions on $D$ with finite $L^2$ norm
$$L^2_{\mathcal{O}}(D):=\{f\in\mathcal{O}(D)\big|\ ||f||^2_D=\int_D|f|^2d\lambda_n<\infty\}.$$
whose inner product is defined to be $(f,g)=\int_Df\cdot\overline gd\lambda_n$.

Let $I\subset\mathcal{O}_o$ be an ideal and $(e_k)_{k\in\mathbb{N}^+}$ an orthonormal basis of $$\mathcal{H}_I:=\{f{\in}L^2_{\mathcal{O}}(D)\big|(f,o)\in I\},$$ a closed subspace of $L^2_{\mathcal{O}}(D)$.
It is known that there exists a neighborhood $U_0{\subset}{\subset}D$ of $o$, integer $k_0>0$ and some constant $C_0>1$ such that
$$\sum\limits_{k=1}^{\infty}|e_k|^2\leq C_0\cdot\sum_{k=1}^{k_0}|e_k|^2\quad\mbox{on}\ U_0.$$
one can see the detail in Lemma \ref{bergman}.

Put
$$C=C_{\varepsilon_0}(\varphi):=\big[(\frac{e^{(\varepsilon_0+1)(t_0+1)}}{\varepsilon_0}C_0\sum\limits_{k=1}^{k_0}
\int_{D}\mathbbm{1}_{\{\varphi<-t_{0}\}}|e_k|^{2}d\lambda_{n})^{-\frac{1}{2}}-(1+(\frac{e^{(\varepsilon_0+1)(t_0+1)}}
{\varepsilon_0})^{-\frac{1}{2}})\big]^{-1},$$
where $t_0,\varepsilon_0$ are two positive numbers and $\varphi$ is negative on $D$ with $\varphi(0)=-\infty$.

In the present article, we obtain the following estimation of the weighted $L^2$ norm near the singularities of plurisubharmonic weight related to SOC:

\begin{thm} \label{main}
Assume that $\mathscr{I}(\varphi)_o\subset I\subset\mathcal{O}_o$.
If $C>0$, then
$$\int_{U_0\cap\{\varphi<-(t_0+1)\}}(\sum\limits_{k=1}^{\infty}|e_k|^2)e^{-\varphi}d\lambda_n<C^2.$$
\end{thm}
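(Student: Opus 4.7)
The plan is to reduce the target integral to a finite sum, produce for each basis element $e_k$ (with $k\le k_0$) a holomorphic modification $F_k$ still lying in $\mathcal{H}_I$ that approximates $e_k$ on the sublevel set $\{\varphi<-(t_0+1)\}$ up to a $\bar\partial$-solution with small $(1+\varepsilon_0)\varphi$-weighted $L^2$ norm, and then close the estimate by a self-referential inequality whose solution is exactly $C$.

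First I would exploit the pointwise domination
$$\sum_{k=1}^{\infty}|e_k|^2\;\le\;C_0\sum_{k=1}^{k_0}|e_k|^2\quad\text{on }U_0$$
to replace the infinite sum by a finite one, reducing the problem to bounding $\sum_{k\le k_0}\int_{U_0\cap\{\varphi<-(t_0+1)\}}|e_k|^2e^{-\varphi}\,d\lambda_n$. Next I would fix a Lipschitz truncation $\eta$ with $\eta(\varphi)\equiv 1$ on $\{\varphi\le-(t_0+1)\}$ and $\eta(\varphi)\equiv 0$ on $\{\varphi\ge-t_0\}$, and for each $k\le k_0$ apply the $L^2$ solvability machinery of H\"ormander/Demailly/Ohsawa--Takegoshi used in \cite{G-Z_open, G-Z_effect}, with plurisubharmonic weight $(1+\varepsilon_0)\varphi$, to solve
$$\bar\partial u_k\;=\;\bar\partial(\eta(\varphi)e_k)\;=\;\eta'(\varphi)\,\bar\partial\varphi\cdot e_k\quad\text{on }D$$
with the estimate
$$\int_D|u_k|^2\,e^{-(1+\varepsilon_0)\varphi}\,d\lambda_n\;\le\;\frac{e^{(\varepsilon_0+1)(t_0+1)}}{\varepsilon_0}\int_{\{\varphi<-t_0\}}|e_k|^2\,d\lambda_n.$$
The prefactor reproduces the constant $B=e^{(\varepsilon_0+1)(t_0+1)}/\varepsilon_0$ from the formula for $C$; summing in $k$ and multiplying by $C_0$ produces the quantity $A$ appearing there.

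Setting $F_k:=\eta(\varphi)e_k-u_k$, the function $F_k$ is holomorphic on $D$ and lies in $L^2(D,e^{-(1+\varepsilon_0)\varphi})$. The strong openness conjecture (proved in \cite{G-Z_open}) then gives $(F_k,o)\in\mathscr{I}((1+\varepsilon_0)\varphi)_o\subset\mathscr{I}(\varphi)_o\subset I$, so $F_k\in\mathcal{H}_I$. The orthonormality of $(e_j)$ in $\mathcal{H}_I$ then forces $\sum_j|(F_k,e_j)_D|^2=\|F_k\|_D^2$, so the Bessel projection of $F_k$ onto the span of the basis controls both $F_k$ and $e_k$. Combining this minimality, the triangle inequality applied to $F_k-e_k=(\eta(\varphi)-1)e_k-u_k$, a Cauchy--Schwarz pairing between $L^2$ and $L^2(e^{-\varphi})$ over $\{\varphi<-(t_0+1)\}$, and the weighted bound on $\|u_k\|$ above, one obtains an inequality of the shape $\sqrt{M}\le\sqrt{A}+\sqrt{A}\,\sqrt{M/B}+\sqrt{A}\,\sqrt{M}$ for $M:=\int_{U_0\cap\{\varphi<-(t_0+1)\}}(\sum|e_k|^2)e^{-\varphi}\,d\lambda_n$. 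Solving for $\sqrt{M}$ (which is legitimate precisely when $C>0$, i.e., $A^{-1/2}>1+B^{-1/2}$) yields $\sqrt{M}<C$.

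The main obstacle I anticipate is organizing the last step so that the three pieces $A^{-1/2}$, $B^{-1/2}$, and the additive $1$ composing $C^{-1}$ come out with the correct coefficients rather than with extraneous universal constants; in particular, the weighted-to-unweighted pairing must exactly produce the factor $e^{(\varepsilon_0+1)(t_0+1)}/\varepsilon_0$, and the minimality argument must avoid losing a factor when the $\bar\partial$-solution $u_k$ is compared against both the unweighted norm $\|e_k\|_D=1$ and the weighted tail on $\{\varphi<-t_0\}$. A secondary technical point, resolved by standard pseudoconvex exhaustion and smooth regularization as in \cite{G-Z_open, G-Z_effect}, is applying $L^2$ solvability with the plurisubharmonic weight $(1+\varepsilon_0)\varphi$ that equals $-\infty$ at the origin.
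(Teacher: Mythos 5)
Your proposal is essentially the paper's own proof: the cutoff-plus-$\bar\partial$ construction of $F_k=\eta(\varphi)e_k-u_k$ with weight $(1+\varepsilon_0)\varphi$ is exactly the content of Lemma \ref{G-Z} (with $\eta=1-b_{t_0}(\varepsilon_0\varphi)$), the pointwise domination is Lemma \ref{bergman}, and the orthonormal expansion of $F_k$ in $\mathcal{H}_I$ with $\|F_k\|_D^2=\sum_j|a_k^j|^2$ is the paper's step as well; your self-referential inequality $\sqrt M\le\sqrt A+\sqrt{A/B}\sqrt M+\sqrt A\sqrt M$ with $A=BC_0\sum_k\int_D\mathbbm{1}_{\{\varphi<-t_0\}}|e_k|^2$ does close to exactly the constant $C$. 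Two points need repair. First, your appeal to strong openness to get $(F_k,o)\in\mathscr{I}((1+\varepsilon_0)\varphi)_o$ is both unnecessary and incorrect: SOC gives no such thing for $F_k$ itself, and $e_k$ need not lie in $\mathscr{I}((1+\varepsilon_0)\varphi)_o$. What is true, and all that is needed, is that $(F_k-e_k,o)=(-u_k,o)\in\mathscr{I}((1+\varepsilon_0)\varphi)_o\subset\mathscr{I}(\varphi)_o\subset I$ directly from the finiteness of $\int_D|u_k|^2e^{-(1+\varepsilon_0)\varphi}d\lambda_n$ and the hypothesis $\mathscr{I}(\varphi)_o\subset I$ (this is where that hypothesis enters); since $(e_k,o)\in I$, this yields $(F_k,o)\in I$ and hence $F_k\in\mathcal{H}_I$. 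Second, solving a self-referential inequality of the form $\sqrt M\le a+b\sqrt M$ presupposes $M<\infty$, which is part of what is being proved; you must either truncate (integrate over $U_0\cap\{\varphi<-(t_0+1)\}\cap\{\varphi>-N\}$, or sum only $j\le N$, and let $N\to\infty$) or, as the paper does, avoid the issue entirely by first establishing the pointwise lower bound $\bigl(\sum_{k\le k_0}|F_k-e_k|^2\bigr)^{1/2}\ge A'\bigl(\sum_{j=1}^\infty|e_j|^2\bigr)^{1/2}$ on $U_0$ with $A'=C_0^{-1/2}-(B^{1/2}+1)S^{1/2}>0$ and only then integrating against $e^{-\varphi}$, which gives $A'^2M\le BS$ with no circularity.
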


\begin{cor} \label{cor1}
Let $e_k\ (1\leq k\leq k_0)$ be generators of $I=\mathscr{I}(\varphi)_o$ with bounded $\sum_{k=1}^{k_0}|e_k|$ on $D$, which is in the unit ball $B(o;1)$ and $$\sum\limits_{k=1}^{k_0}\int_{D}|e_k|^{2}e^{-(1+\varepsilon_0)\varphi}d\lambda_{n}<\infty.$$
Then, for any $M>0$, there exists $t_0\gg0$ such that for any negative plurisubharmonic function $\psi$ on $D$ with $\mathscr{I}(\psi)_o\subset\mathscr{I}(\varphi)_o$ and
$$\sum\limits_{k=1}^{k_0}\int_{D}\mathbbm{1}_{\{\tilde\psi<-t_{0}\}}|e_k|^{2}d\lambda_{n}\leq2\sum\limits_{k=1}^{k_0}
\int_{D}\mathbbm{1}_{\{\tilde\varphi<-t_{0}\}}|e_k|^{2}d\lambda_{n},\qquad (*)$$
we have
$$\int_{U_0\cap\{|z|<e^{-\frac{(1+\varepsilon_0)(1+\varepsilon_0/2)(t_0+1)}{\varepsilon_0/2}}\}}(\sum\limits_{k=1}^{\infty}
|e_k|^2)e^{-\tilde\psi}d\lambda_n<M,$$
where
$$\tilde\varphi=\varphi+\frac{\varepsilon_0/2}{(1+\varepsilon_0)(1+\varepsilon_0/2)}\frac{\log|z|}{2},\
\tilde\psi=\psi+\frac{\varepsilon_0/2}{(1+\varepsilon_0)(1+\varepsilon_0/2)}\frac{\log|z|}{2}.$$
\end{cor}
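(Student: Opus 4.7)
My plan is to apply Theorem~\ref{main} to the modified weight $\tilde\psi$ with the ideal $I:=\mathscr{I}(\varphi)_o$. Write $c:=\frac{\varepsilon_0/2}{(1+\varepsilon_0)(1+\varepsilon_0/2)}$ for brevity. First I verify the hypotheses: $\tilde\psi$ is plurisubharmonic on $D$ as a sum of psh functions, is negative on $D\subset B(o;1)$ (both $\psi$ and $\log|z|$ are $\leq 0$ there), and $\tilde\psi(o)=-\infty$ because of the log pole. Moreover, since $\tilde\psi\leq\psi$ pointwise, $\mathscr{I}(\tilde\psi)_o\subset\mathscr{I}(\psi)_o\subset\mathscr{I}(\varphi)_o=I$, as required.

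Next I compare the two integration regions. On the small ball $\{|z|<e^{-(t_0+1)/c}\}$ the logarithmic contribution in $\tilde\psi$ is large enough in absolute value that, combined with $\psi\leq 0$, it forces $\tilde\psi<-(\tilde t_0+1)$ for an appropriate threshold $\tilde t_0$ comparable to $t_0$; hence this ball is contained in $\{\tilde\psi<-(\tilde t_0+1)\}$. Applying Theorem~\ref{main} to the weight $\tilde\psi$ with threshold $\tilde t_0$ and some parameter $\tilde\varepsilon\in(0,\varepsilon_0/2)$ thus yields
$$\int_{U_0\cap\{|z|<e^{-(t_0+1)/c}\}}\Bigl(\sum_k|e_k|^2\Bigr)e^{-\tilde\psi}\,d\lambda_n\leq C_{\tilde\varepsilon}(\tilde\psi)^2,$$
so it remains to show $C_{\tilde\varepsilon}(\tilde\psi)^2<M$ for $t_0$ sufficiently large.

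The heart of the proof is the estimation of the quantity $J:=\sum_k\int_D\mathbbm{1}_{\{\tilde\psi<-\tilde t_0\}}|e_k|^2\,d\lambda_n$ in the definition of $C_{\tilde\varepsilon}(\tilde\psi)$. Hypothesis $(*)$ bounds $J$ by $2\sum_k\int_D\mathbbm{1}_{\{\tilde\varphi<-\tilde t_0\}}|e_k|^2\,d\lambda_n$ (after aligning $\tilde t_0$ with the $t_0$ used in $(*)$), and a Chebyshev-type inequality with weight $e^{-(1+\varepsilon_0/2)\tilde\varphi}$ then gives
$$J\leq 2e^{-(1+\varepsilon_0/2)\tilde t_0}\sum_k\int_D|e_k|^2 e^{-(1+\varepsilon_0/2)\tilde\varphi}\,d\lambda_n.$$
That last integral is finite: writing $e^{-(1+\varepsilon_0/2)\tilde\varphi}=e^{-(1+\varepsilon_0/2)\varphi}\cdot|z|^{-\alpha}$ for a small $\alpha>0$ and applying H\"older's inequality with conjugate exponents $p=(1+\varepsilon_0)/(1+\varepsilon_0/2)$ and $q=2(1+\varepsilon_0)/\varepsilon_0$, the $p$-th factor reduces, via $p(1+\varepsilon_0/2)=1+\varepsilon_0$ and the bound $|e_k|^{2p-2}\leq(\sup_D\sum_j|e_j|)^{2p-2}$, to the hypothesis $\sum_k\int|e_k|^2 e^{-(1+\varepsilon_0)\varphi}\,d\lambda_n<\infty$, while the $q$-th factor is finite because $c$ is tuned precisely so that $\alpha q\leq 1<2n$.

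Substituting this back into $C_{\tilde\varepsilon}(\tilde\psi)$, the governing factor $e^{(\tilde\varepsilon+1)(\tilde t_0+1)}J$ is bounded by a constant multiple of $e^{(\tilde\varepsilon-\varepsilon_0/2)\tilde t_0}$, which tends to $0$ as $\tilde t_0\to\infty$ since $\tilde\varepsilon<\varepsilon_0/2$; hence $C_{\tilde\varepsilon}(\tilde\psi)\to 0$, and in particular $C_{\tilde\varepsilon}(\tilde\psi)^2<M$ once $t_0$ is large enough. The principal obstacle I foresee is the coordinated choice of parameters: $\tilde\varepsilon$ must be strictly less than $\varepsilon_0/2$ so that the Chebyshev decay dominates the factor $e^{(\tilde\varepsilon+1)(\tilde t_0+1)}$, while $\tilde t_0$ must be compatible both with the $t_0$ appearing in $(*)$ and with the inclusion of the small ball inside $\{\tilde\psi<-(\tilde t_0+1)\}$. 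The specific coefficient $c$ in the definitions of $\tilde\varphi,\tilde\psi$ is precisely the choice that makes all three constraints simultaneously satisfiable.
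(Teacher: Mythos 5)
Your proposal follows essentially the same route as the paper's proof: apply Theorem \ref{main} to the twisted weight $\tilde\psi$ (whose multiplier ideal lies in $I=\mathscr{I}(\varphi)_o$ because $\tilde\psi\leq\psi$), transfer the sublevel-set mass from $\tilde\psi$ to $\tilde\varphi$ via $(*)$, and make the constant small by combining a Chebyshev bound with the H\"older-inequality finiteness of $\sum_k\int_D|e_k|^2e^{-(1+\varepsilon_0/2)\tilde\varphi}d\lambda_n$. The only minor differences are that the paper fixes the parameter equal to $\varepsilon_0/2$ and lets dominated convergence (rather than your strict choice $\tilde\varepsilon<\varepsilon_0/2$ and the resulting exponential decay) force $C_{\varepsilon_0/2}(\tilde\varphi)\to0$, and that it keeps the threshold equal to the $t_0$ of $(*)$ rather than a separate $\tilde t_0$ --- which you should also do, since $(*)$ only controls the sublevel set at level $t_0$ itself and would not bound the larger set $\{\tilde\psi<-\tilde t_0\}$ if $\tilde t_0<t_0$.
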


By the truth of SOC and the above Corollary, we have the following convergence of the weighted $L^2$ norm related to SOC:

\begin{cor} \label{cor2}
Let $(\varphi_j)_{j\in\mathbb{N}^+}$ be a sequence of negative plurisubharmonic functions on $D$, which is convergent to $\varphi$ in Lebesgue measure, and $\mathscr{I}(\varphi_j)_o\subset\mathscr{I}(\varphi)_o$. Let $(F_j)_{j\in\mathbb{N}^+}$ be a sequence of holomorphic functions on $D$ with $(F_j,o)\in\mathscr{I}(\varphi)_o$, which is compactly convergent to a holomorphic function $F$. Then,
$|F_j|^2e^{-\varphi_j}$ converges to $|F|^2e^{-\varphi}$ in the $L^1_{loc}$ norm near $o$. In particular, there exists $\varepsilon_0>0$ such that $\mathscr{I}(\varphi_j)_o=\mathscr{I}((1+\varepsilon_0)\varphi_j)_o=\mathscr{I}(\varphi)_o$ for any large enough $j$.
\end{cor}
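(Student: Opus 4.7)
The plan is to deduce the $L^1_{\mathrm{loc}}$ convergence of $g_j := |F_j|^2 e^{-\varphi_j}$ to $g := |F|^2 e^{-\varphi}$ from Corollary~\ref{cor1} via a Vitali argument, and then invoke the strong openness conjecture (SOC) of \cite{G-Z_open} to extract the statement about ideal equality.

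For the $L^1_{\mathrm{loc}}$ convergence, after shrinking $D$ to a smaller bounded pseudoconvex subdomain around $o$ one may assume $|F_j|\le C$ uniformly on $D$, so that $F_j\in\mathcal{H}_{\mathscr{I}(\varphi)_o}$ with $\|F_j\|$ uniformly bounded. The aim is uniform integrability of $(g_j)$ on some $V_r = U_0\cap\{|z|<r\}$, which combined with pointwise a.e.\ convergence will give $L^1(V_r)$ convergence by Vitali. I would secure a uniform $L^{1+\delta}$ bound on $(g_j)$ by applying Corollary~\ref{cor1} with $\psi := (1+\delta)\varphi_j$ for some small $\delta>0$: the inclusion $\mathscr{I}((1+\delta)\varphi_j)_o\subset\mathscr{I}(\varphi_j)_o\subset\mathscr{I}(\varphi)_o$ is automatic, and condition $(*)$ holds for $j$ large, since $\widetilde{(1+\delta)\varphi_j}\to\widetilde{(1+\delta)\varphi}$ in Lebesgue measure and $\widetilde{(1+\delta)\varphi}\to\tilde\varphi$ pointwise as $\delta\to 0$. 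This produces a uniform bound $\int_{V_r} K(z,z)\,e^{-(1+\delta)\varphi_j}\,d\lambda_n \le M$, where $K(z,z)=\sum_{k\ge 1}|e_k(z)|^2$ is the Bergman kernel of $\mathcal{H}_{\mathscr{I}(\varphi)_o}$. The reproducing inequality $|F_j|^2\le\|F_j\|^2 K(z,z)$ combined with $|F_j|^{2\delta}\le C^{2\delta}$ then yields $\int_{V_r} g_j^{1+\delta}\,d\lambda_n\le C^{2\delta}\|F_j\|^2 M$ uniformly in $j$, and Hölder's inequality gives $\int_E g_j\le (C^{2\delta}\|F_j\|^2 M)^{1/(1+\delta)}|E|^{\delta/(1+\delta)}$ for measurable $E\subset V_r$, whence uniform integrability. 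Passing to a subsequence along which $\varphi_j\to\varphi$ a.e., and using $F_j\to F$ uniformly, we get $g_j\to g$ a.e.\ on $V_r$; Vitali's theorem delivers $g_j\to g$ in $L^1(V_r)$ along the subsequence, and the standard subsequence-of-every-subsequence argument promotes this to the full sequence.

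For the ideal equality, by SOC fix $\varepsilon_0>0$ with $\mathscr{I}(\varphi)_o=\mathscr{I}((1+\varepsilon_0)\varphi)_o$, and let $f_1,\dots,f_{k_0}$ generate $\mathscr{I}(\varphi)_o$, so $(f_k,o)\in\mathscr{I}((1+\varepsilon_0)\varphi)_o$. Applying the $L^1_{\mathrm{loc}}$ statement just proved to the constant sequence $F_j\equiv f_k$ with weights $(1+\varepsilon_0)\varphi_j$ --- whose hypotheses hold via $\mathscr{I}((1+\varepsilon_0)\varphi_j)_o\subset\mathscr{I}(\varphi_j)_o\subset\mathscr{I}(\varphi)_o=\mathscr{I}((1+\varepsilon_0)\varphi)_o$ --- yields $|f_k|^2 e^{-(1+\varepsilon_0)\varphi_j}\to |f_k|^2 e^{-(1+\varepsilon_0)\varphi}$ in $L^1_{\mathrm{loc}}$ near $o$, so $(f_k,o)\in\mathscr{I}((1+\varepsilon_0)\varphi_j)_o$ for all large $j$. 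Because the $f_k$ generate $\mathscr{I}(\varphi)_o$, the chain $\mathscr{I}(\varphi)_o\subset\mathscr{I}((1+\varepsilon_0)\varphi_j)_o\subset\mathscr{I}(\varphi_j)_o\subset\mathscr{I}(\varphi)_o$ is forced to consist of equalities.

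The main obstacle lies in verifying $(*)$ for $\psi=(1+\delta)\varphi_j$: because $\varphi<0$ makes $\widetilde{(1+\delta)\varphi}\le\tilde\varphi$ pointwise, the $\tilde\psi$-sublevel set is genuinely larger than the $\tilde\varphi$-sublevel, and the factor $2$ on the right of $(*)$ must simultaneously absorb this slack (controlled by small $\delta$) and the measure-theoretic error between $\varphi_j$ and $\varphi$ (controlled by large $j$). Arranging the order of quantifiers --- fixing $t_0$ first (so that Corollary~\ref{cor1} produces the desired $M$), then choosing $\delta$ small (so the two sublevel integrals are within factor $\frac{3}{2}$), and only then letting $j$ be large --- is the one genuinely subtle point; the rest is routine Vitali/Hölder bookkeeping.
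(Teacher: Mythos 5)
Your proposal is correct and follows essentially the same route as the paper: reduce to a.e.\ convergence by passing to subsequences, apply Corollary~\ref{cor1} to a slightly amplified weight ($(1+\delta)\varphi_j$ in your version, $(1+\varepsilon/2)\varphi_j$ in the paper's, with the order of quantifiers $t_0$, then $\delta$, then $j$ handled the same way via dominated convergence for condition $(*)$), dominate $|F_j|^2$ by the Bergman kernel of $\mathcal{H}_{\mathscr{I}(\varphi)_o}$ via the reproducing property (the paper's expansion $F_j=\sum_k a_j^k e_k$ plus Cauchy--Schwarz), and conclude $L^1_{loc}$ convergence from the uniform $L^{1+\delta}$ bound (your Vitali step is the paper's Lemma~\ref{weakcpt}). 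The ideal-equality claim is likewise obtained in both arguments by rerunning the first assertion with $\varphi_j$ replaced by $(1+\varepsilon_0)\varphi_j$ and using finite generation of $\mathscr{I}(\varphi)_o$.
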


The last conclusion in the above Corollary can be obtained by Proposition 1.8 in \cite{G-Z_effect} and finite generation of $\mathscr{I}(\varphi)_o$.

\begin{rem}
Let $(\varphi_j)_{j\in\mathbb{N}^+}$ be a sequence of negative plurisubharmonic functions on $D$. If $\varphi_j$ is convergent to $\varphi$ in Lebesgue measure, then $\varphi_j$ converges to $\varphi$ in the $L^p_{loc}\ (0<p<\infty)$ norm.
\end{rem}

\begin{proof}
It suffices to prove $p\in\mathbb{N}^+$. By a small enough multiplication, it is enough to assume the Lelong number $\nu(\varphi,o)<1$. Thus, $\mathscr{I}(\varphi_j)_o\subset\mathscr{I}(\varphi)_o=\mathcal{O}_o$. Then, the desired result follows from Corollary \ref{cor2} and the inequality $$\frac{1}{p!}\int_D|\varphi_j-\varphi|^pd\lambda_n\leq\int_D|e^{-\varphi_j}-e^{-\varphi}|d\lambda_n.$$
which follows from the inequality $\frac{1}{p!}(a-b)^p\leq(e^{a-b}-1)e^b$, for any $a\geq b\geq0$.
\end{proof}

\section{Lemmas used in the proof of main results}

We are now in a position to prove the following Lemma.

\begin{lem} \label{bergman}
Let $I\subset\mathcal{O}_o$ be an ideal and $(e_k)_{k\in\mathbb{N}^+}$ an orthonormal basis of $$\mathcal{H}_I:=\{f{\in}L^2_{\mathcal{O}}(D)\big|(f,o)\in I\},$$ a closed subspace of $L^2_{\mathcal{O}}(D)$.
Then, there exists a neighborhood $U_0{\subset}{\subset}D$ of $o$, integer $k_0>0$ and some constant $C_0>1$ such that
$$\sum\limits_{k=1}^{\infty}|e_k|^2\leq C_0\cdot\sum_{k=1}^{k_0}|e_k|^2\quad\mbox{on}\ U_0.$$
\end{lem}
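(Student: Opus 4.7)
The plan is to combine finite generation of $I$ in the Noetherian ring $\mathcal{O}_o$ with a local uniform division result for holomorphic functions and a truncation in the orthonormal basis. The starting observation is that
$$\sum_{k=1}^{\infty}|e_k(z)|^{2}=\sup\{|f(z)|^{2}:f\in\mathcal{H}_I,\ \|f\|_{L^2(D)}\leq 1\}$$
is the diagonal of the reproducing kernel of $\mathcal{H}_I$, so the desired inequality will follow from a pointwise bound on $|f(z)|$ for normalized $f\in\mathcal{H}_I$ in terms of finitely many basis functions.

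First I would pick $f_1,\dots,f_p\in\mathcal{H}_I$ whose germs at $o$ actually generate $I$ as an $\mathcal{O}_o$-ideal. Noetherianity supplies finitely many germ generators, and each of these is lifted to an element of $\mathcal{H}_I$ by multiplying by a cutoff near $o$ and solving $\bar\partial$ on the pseudoconvex domain $D$ against a H\"ormander weight chosen so the correction vanishes to a prescribed order $N$ at $o$; Artin--Rees together with Nakayama's lemma then upgrades ``generation modulo $\mathfrak{m}^N$'' to honest generation of $I$.

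The main step, which I expect to be the hardest, is a uniform local division: on some neighborhood $U_0\subset\subset D$ of $o$, every $f\in\mathcal{H}_I$ admits a representation $f=\sum_{j=1}^{p}h_j f_j$ on $U_0$ with holomorphic $h_j$ satisfying $\sup_{U_0}|h_j|\leq C\|f\|_{L^2(D)}$ for a constant $C$ independent of $f$. I would prove this by applying the open mapping theorem to the continuous Fr\'echet surjection $\mathcal{O}(U')^{p}\to\mathcal{H}_I|_{U'}$, $(h_j)\mapsto\sum_j h_j f_j$, on a slightly larger $U'\supset\overline{U_0}$, and then invoking interior Cauchy estimates both to trade the Fr\'echet seminorms on $U'$ for $L^\infty(U_0)$ bounds on each $h_j$ and to dominate the input $f$ by its global $L^2(D)$ norm.

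Once this is in place, the rest is a short calculation. From the division and the Cauchy--Schwarz inequality, $|f(z)|^{2}\leq pC^{2}\|f\|_{L^2(D)}^{2}\sum_{j=1}^{p}|f_j(z)|^{2}$ on $U_0$, hence
$$\sum_{k=1}^{\infty}|e_k(z)|^{2}\leq pC^{2}\sum_{j=1}^{p}|f_j(z)|^{2}\quad\text{on }U_0.$$
Expanding each $f_j=\sum_k c_k^{j}e_k$ and writing $f_j=f_j^{(k_0)}+g_j$ with $f_j^{(k_0)}=\sum_{k\leq k_0}c_k^{j}e_k$, Cauchy--Schwarz gives $|f_j(z)|^{2}\leq 2\|f_j\|^{2}\sum_{k\leq k_0}|e_k(z)|^{2}+2\|g_j\|^{2}\sum_{k=1}^{\infty}|e_k(z)|^{2}$. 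Choosing $k_0$ large enough that $4pC^{2}\sum_{j=1}^{p}\|g_j\|^{2}<1$ absorbs the tail of $\sum_k|e_k|^{2}$ into the left-hand side and yields the claimed bound $\sum_{k=1}^{\infty}|e_k(z)|^{2}\leq C_0\sum_{k=1}^{k_0}|e_k(z)|^{2}$ on $U_0$ for an explicit $C_0>1$.
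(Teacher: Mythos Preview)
Your strategy is quite different from the paper's, which is both shorter and sidesteps the delicate points you run into. The paper passes to $D\times D$, applies the strong Noetherian property of coherent sheaves to the increasing chain of ideal sheaves generated by $\bigl(e_k(z)\overline{e_k(\bar w)}\bigr)_{k\le N}$ to obtain stabilization at some $k_0$ on a neighborhood $U\times U$, and then uses that coherent ideal sheaves are closed under locally uniform limits to conclude that the full kernel $\sum_k e_k(z)\overline{e_k(\bar w)}$ lies in the sheaf generated by the first $k_0$ terms. Restricting to the conjugate diagonal $w=\bar z$ produces the inequality in one line; no lifting of generators, no division with bounds, and no tail-absorption argument is needed.

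Your outline has a concrete gap in the lifting step. Solving $\bar\partial$ with a weight that forces the correction $u_j$ to vanish to order $N$ at $o$ only gives $(u_j,o)\in\mathfrak m^N$, hence $(f_j,o)\in I+\mathfrak m^N$; for a general (non $\mathfrak m$-primary) ideal this does \emph{not} put $(f_j,o)$ into $I$. Artin--Rees and Nakayama tell you that perturbing a generating set of $I$ by elements of $\mathfrak m^N\cap I$ still yields generators, but they cannot move a perturbation from $\mathfrak m^N$ into $I$ in the first place. You genuinely need $f_j\in\mathcal H_I$, since the final absorption step expands $f_j=\sum_k c_k^{j}e_k$ in the orthonormal basis of $\mathcal H_I$. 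The repair is to solve $\bar\partial$ against a Skoda-type weight such as $(\min(n,p-1)+1+\varepsilon)\log\sum_j|g_j|^2$, so that the correction has germ in $I$; then your Artin--Rees/Nakayama step applies as written. (The surjectivity you assert in the division step---that $f|_{U'}$ lies in $\Gamma(U',\mathcal J)$ and not merely in the stalk $\mathcal J_o$---also deserves a sentence of justification when $V(I)$ is positive-dimensional, though this is standard once $U'$ is taken small enough.)
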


\begin{proof}
It follows from the strong Noetherian property of coherent analytic sheaves that the sequence of ideal sheaves generated by the holomorphic functions
$$(e_k(z)\overline{e_k(\overline w)})_{k\leq N},\ N=1,2,...,$$ on $D{\times}D$ is locally stationary.

Let $U{\subset}{\subset}D$ be a neighborhood of the origin $o$. Then there exists $k_0>0$ such that for any
$N\geq k_0$ we have $(e_k(z)\overline{e_k(\overline w)})_{k\leq N}=(e_k(z)\overline{e_k(\overline w)})_{k\leq k_0}$
on $U$. Since
$$|\sum\limits_{k=1}^{\infty}e_k(z)\overline{e_k(\overline w)}|\leq\big(\sum\limits_{k=1}^{\infty}|e_k(z)|^2 \sum\limits_{k=1}^{\infty}|e_k(\overline w)|^2\big)^{\frac{1}{2}},$$
then $\sum\limits_{k=1}^{\infty}e_k(z)\overline{e_k(\overline w)}$ is uniformly convergent on every compact subset of $D{\times}D$. By the closedness of coherent ideal sheaves under the topology of compact convergence (see \cite{G-R}), $\sum\limits_{k=1}^{\infty}e_k(z)\overline{e_k(\overline w)}$ is a section of the coherent ideal sheaf generated by $(e_k(z)\overline{e_k(\overline w)})_{k\leq k_0}$ over $U{\times}U$. Then, there exists a neighborhood $U_0{\subset}{\subset}U$ of $o$ and functions $a_k(z,w)\in\mathcal{O}(\overline{U_0{\times}U_0}),\ 1\leq k\leq k_0$, such that on $\overline{U_0{\times}U_0}$
$$\sum\limits_{k=1}^{\infty}e_k(z)\overline{e_k(\overline w)}=
\sum\limits_{k=1}^{k_0}a_k(z,w)e_k(z)\overline{e_k(\overline w)}.$$
Finally, by restricting to the conjugate diagonal $w=\overline z$, we get
$$\sum\limits_{k=1}^{\infty}|e_k|^2\leq C_0\cdot\sum_{k=1}^{k_0}|e_k|^2\quad\mbox{on}\ U_0.$$
\end{proof}

To prove Theorem \ref{main}, we also need the following Lemma, whose various forms already appear in \cite{G-Z, G-Z_optimal, G-Z_open_b, G-Z_effect}.

\begin{lem} \label{G-Z}
Let $B_{0}\in(0,1]$ be arbitrarily given and $t_{0}$ a positive number. Let $D_{v}$ be a strongly pseudoconvex domain relatively compact in
$\Delta^{n}$ containing $o$. Let $F$ be a holomorphic function on $\Delta^{n}$. Let $\varphi,\psi$ be two negative plurisubharmonic functions on $\Delta^{n}$, such that $\varphi(o)=\psi(o)=-\infty$. Then there exists a holomorphic function $F_{v,t_{0}}$ on $D_{v}$, such that,$$(F_{v,t_{0}}-F,o)\in\mathscr{I}(\varphi+\psi)_{o}$$ and
\begin{equation}
\label{equ:3.4}
\begin{split}
&\int_{ D_v}|F_{v,t_0}-(1-b_{t_0}(\psi))F|^{2}e^{-\varphi}d\lambda_{n}\\
\leq&(1-e^{-(t_{0}+B_{0})})\int_{D_v}\frac{1}{B_{0}}(\mathbbm{1}_{\{-t_{0}-B_{0}<\psi<-t_{0}\}})|F|^{2}
e^{-\varphi-\psi}d\lambda_{n},
\end{split}
\end{equation}
where $b_{t_{0}}(t)=\int_{-\infty}^{t}\frac{1}{B_{0}}\mathbbm{1}_{\{-t_{0}-B_{0}< s<-t_{0}\}}ds$.
\end{lem}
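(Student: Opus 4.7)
The plan is to construct $F_{v,t_{0}}$ as a correction of the truncated function $v:=(1-b_{t_{0}}(\psi))F$, where the correction is obtained by solving a $\bar\partial$-equation on $D_v$ with a carefully designed twisted weight. First I would record that $b_{t_{0}}$ is a Lipschitz cutoff equal to $1$ on $\{s\geq-t_{0}\}$ and to $0$ on $\{s\leq-t_{0}-B_{0}\}$, so $v$ coincides with $F$ on the neighborhood $\{\psi<-t_{0}-B_{0}\}$ of $o$ (which is nonempty since $\psi(o)=-\infty$), while $\bar\partial v=-F\,b'_{t_{0}}(\psi)\,\bar\partial\psi$ is a bounded $(0,1)$-form supported in the thin set $\{-t_{0}-B_{0}<\psi<-t_{0}\}$. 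If one finds $u$ on $D_v$ with $\bar\partial u=\bar\partial v$ and suitable $L^2$ control, then $F_{v,t_{0}}:=v-u$ is holomorphic on $D_v$, equals $F-u$ near $o$, and will satisfy the desired integral bound.

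Next I would pass to a smooth setting by approximating $\varphi$ and $\psi$ from above by decreasing sequences of smooth strictly plurisubharmonic functions on a slight enlargement of $D_v$, which is possible since $D_v\Subset\Delta^{n}$ is strongly pseudoconvex, reducing the problem to standard $L^2$ theory. On the regularized data I would solve $\bar\partial u=\bar\partial v$ using the twisted H\"ormander $L^2$ method in the Guan--Zhou/Berndtsson/Ohsawa form: introduce auxiliary scalars $\eta=\eta(\psi)$ and $\phi=\phi(\psi)$ depending only on $\psi$ and on the parameters $t_{0},B_{0}$, chosen so that (i) the pointwise twisted curvature inequality needed for the Bochner--Kodaira--Morrey estimate holds on $D_v$, which reduces to an ODE inequality in $\psi$ since $\varphi,\psi$ are plurisubharmonic, and (ii) $\eta\,e^{-\phi(\psi)}\geq 1$, so that the resulting solution satisfies $\int_{D_v}|u|^2e^{-\varphi}d\lambda_n\leq\int_{D_v}\eta|u|^2e^{-\varphi-\phi(\psi)}d\lambda_n$. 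When $\eta$ and $\phi$ are solved for optimally, the right-hand side collapses to $(1-e^{-(t_{0}+B_{0})})B_{0}^{-1}\int_{D_v}\mathbbm{1}_{\{-t_{0}-B_{0}<\psi<-t_{0}\}}|F|^{2}e^{-\varphi-\psi}d\lambda_n$; the exponential factor $1-e^{-(t_{0}+B_{0})}$ is forced by the integrating factor $e^{-\psi}$ of the optimizing ODE acting on $b'_{t_{0}}(\psi)^{2}$ over a $\psi$-band of thickness $B_{0}$.

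Passing to the limit in the smoothing by weak compactness in $L^{2}_{\mathrm{loc}}$ and Fatou's lemma delivers the required $u$ on $D_v$. For the germ condition one uses that the twisted estimate produced along the way actually controls $|u|^{2}e^{-\varphi-\psi}$ in a neighborhood of $o$, where $\bar\partial v\equiv 0$ and the twist $\eta\,e^{-\phi(\psi)}$ reduces essentially to $e^{-\psi}$; hence $(u,o)\in\mathscr{I}(\varphi+\psi)_{o}$, and since $F_{v,t_{0}}-F=-u$ near $o$ the membership $(F_{v,t_{0}}-F,o)\in\mathscr{I}(\varphi+\psi)_{o}$ follows. The main obstacle I anticipate is precisely solving the ODE for $\eta,\phi$ so that the constant comes out as the sharp $(1-e^{-(t_{0}+B_{0})})/B_{0}$ and not merely a qualitative bound: this is a one-dimensional variational problem in the $\psi$-direction, and any other choice of convex profile degrades the constant. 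Once the twist is fixed, the remaining steps, namely the curvature check, H\"ormander solvability on the strongly pseudoconvex $D_v$, and removal of the smoothing, are routine.
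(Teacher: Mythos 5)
The paper does not prove this lemma: it is quoted from the cited Guan--Zhou papers (in particular the proof of the strong openness conjecture and its effectiveness version), and your outline is exactly the method used there --- set $v=(1-b_{t_0}(\psi))F$, solve $\bar\partial u=\bar\partial v$ by the twisted H\"ormander/Ohsawa--Takegoshi estimate with auxiliary functions $\eta(\psi)$, $\phi(\psi)$ determined by an ODE, put $F_{v,t_0}=v-u$, and read off the germ condition from the extra factor $e^{-\psi}$ in the twist near $o$. So the strategy is the right one and matches the source.

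That said, as a proof your text stops exactly where the lemma's content begins. The entire point of the statement is the precise constant $(1-e^{-(t_0+B_0)})/B_0$ together with the weight $e^{-\varphi-\psi}$ on the right-hand side, and you assert rather than derive the choice of $\eta$ and $\phi$ that produces it (``when $\eta$ and $\phi$ are solved for optimally, the right-hand side collapses to\dots''). To close the argument you must exhibit these functions explicitly, verify the pointwise inequality $\eta\, i\partial\bar\partial\varphi - i\partial\bar\partial\eta - \eta^{-1}(1+\text{twist})\, i\partial\eta\wedge\bar\partial\eta \geq (\cdots)\, i\partial\psi\wedge\bar\partial\psi$ needed to absorb the source term, check $\eta e^{-\phi}\geq 1$ on $D_v$ and $\eta e^{-\phi}\gtrsim e^{-\psi}$ near $o$, and integrate the resulting ODE to see that the constant is exactly $1-e^{-(t_0+B_0)}$; none of this is routine, and a generic convex profile gives a strictly worse constant. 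One further imprecision: $\bar\partial v=-Fb_{t_0}'(\psi)\bar\partial\psi$ is in general \emph{not} a bounded $(0,1)$-form, since $\bar\partial\psi$ need not have locally bounded coefficients even after smoothing uniformly in the parameter; the correct statement is that its norm measured against $i\partial\bar\partial$ of the twist is controlled, which is precisely what the curvature inequality above is designed to give. With those points filled in (as in the cited references), the remainder of your plan --- regularization, solvability on the strongly pseudoconvex $D_v$, weak limits --- is indeed standard.
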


In particular, given $\varepsilon_0>0$ and replacing $B_0, t_0, \psi$ by $\varepsilon_0, \varepsilon_0t_0, \varepsilon_0\varphi$ respectively, we have
\begin{equation}
\begin{split}
&\int_{ D_v}|F_{v,t_0}-(1-b_{t_0}(\varepsilon_0\varphi))F|^{2}e^{-\varphi}d\lambda_{n}\\
\leq&\frac{1-e^{-(t_{0}+1)\varepsilon_0}}{\varepsilon_0}\int_{D_v}\mathbbm{1}_{\{-(t_{0}+1)<\varphi<-t_{0}\}}|F|^{2}
e^{-\varphi-\varepsilon\varphi}d\lambda_{n}\\
\leq&\frac{1}{\varepsilon_{0}}\int_{D_v}\mathbbm{1}_{\{-(t_{0}+1)<\varphi<-t_{0}\}}|F|^{2}
e^{(\varepsilon_0+1)(t_0+1)}d\lambda_{n}.
\end{split}
\end{equation}

The following Lemma is well known in real analysis.

\begin{lem} \label{weakcpt}
Let $(f_j)_{j\in\mathbb{N}^+}$ be a sequence of functions in $L^p_{loc}(D)\ (p>1)$, which is convergent to $f$ in Lebesgue measure. If there exists some constant $M>0$ such that $$\big(\int_D|f_j|^pd\lambda_n\big)^{\frac{1}{p}}<M,$$
then $$\int_D|f_j-f|d\lambda_n\to0\quad(j\to\infty).$$
\end{lem}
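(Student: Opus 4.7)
The plan is to combine convergence in measure with Hölder's inequality via the standard ``split the domain'' argument, using that $D$ has finite Lebesgue measure (since $D$ is a bounded domain).

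First I would upgrade the uniform $L^p$ bound to include the limit. Convergence in measure yields an a.e.\ convergent subsequence $(f_{j_k})$, and Fatou's lemma applied to $|f_{j_k}|^p$ gives
$$\Big(\int_D |f|^p\,d\lambda_n\Big)^{1/p} \leq \liminf_{k\to\infty}\Big(\int_D |f_{j_k}|^p\,d\lambda_n\Big)^{1/p} \leq M,$$
so $f \in L^p(D)$ and $\|f_j - f\|_{L^p(D)} \leq 2M$ for every $j$.

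Next, fix $\varepsilon > 0$ and split $D = A_j \cup B_j$ with $A_j = \{|f_j - f| < \varepsilon\}$ and $B_j = \{|f_j - f| \geq \varepsilon\}$. On $A_j$, one has the trivial estimate
$$\int_{A_j} |f_j - f|\,d\lambda_n \leq \varepsilon\,\lambda_n(D),$$
which is finite because $D$ is bounded. On $B_j$, Hölder's inequality with conjugate exponents $p$ and $q = p/(p-1)$ gives
$$\int_{B_j} |f_j - f|\,d\lambda_n \leq \|f_j - f\|_{L^p(D)}\cdot \lambda_n(B_j)^{1/q} \leq 2M\,\lambda_n(B_j)^{1/q}.$$

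Finally, the hypothesis that $f_j \to f$ in Lebesgue measure is exactly the statement $\lambda_n(B_j) \to 0$, so the second term tends to $0$ as $j\to\infty$. Thus $\limsup_{j\to\infty} \int_D|f_j - f|\,d\lambda_n \leq \varepsilon\,\lambda_n(D)$, and letting $\varepsilon\to 0$ finishes the proof. I do not anticipate a genuine obstacle: the argument is Vitali-type and is standard. The only structural point to flag is that the assumption $p > 1$ is essential, since it is used to obtain a positive power of $\lambda_n(B_j)$ via Hölder on the ``bad'' set; the case $p=1$ would require genuine equi-integrability and is false in general.
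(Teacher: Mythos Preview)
Your argument is correct. The paper itself does not supply a proof of this lemma; it simply states that it ``is well known in real analysis'' and moves on. Your Vitali-type splitting argument (using Fatou to get $f\in L^p$, then H\"older on the bad set $\{|f_j-f|\ge\varepsilon\}$) is exactly the standard justification one would give, and every step is sound once one notes---as you do---that $D$ is a bounded domain and hence $\lambda_n(D)<\infty$. Your closing remark that $p>1$ is essential is also accurate.
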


\section{the proof of main results}

\noindent{\textbf{\emph{Proof of Theorem} \ref{main}.} Following from Lemma \ref{G-Z}, for any $1\leq k\leq k_0$,
there exists a holomorphic function $F_k\in\mathcal{O}(D)$ such that
\begin{equation}
\begin{split}
&\int_{D}|F_k-(1-b_{t_0}(\varepsilon_0\varphi))e_k|^{2}e^{-\varphi}d\lambda_{n}\\
&\leq\frac{1}{\varepsilon_{0}}\int_{D}\mathbbm{1}_{\{-(t_{0}+1)<\varphi<-t_{0}\}}
 |e_k|^{2}e^{(\varepsilon_0+1)(t_0+1)}d\lambda_{n}.
\end{split}
\end{equation}

By Minkowski's inequality, we obtain
\begin{equation}
\begin{split}
&\big(\sum\limits_{k=1}^{k_0}\int_{D}|F_k|^2d\lambda_{n}\big)^{\frac{1}{2}}\\
\leq&\big(\sum\limits_{k=1}^{k_0}\int_{D}|F_k-(1-b_{t_0}(\varepsilon_0\varphi))e_k|^{2}e^{-\varphi}d\lambda_{n}\big)
 ^{\frac{1}{2}}+\big(\sum\limits_{k=1}^{k_0}\int_{D}|(1-b_{t_0}(\varepsilon_0\varphi))e_k|^2d\lambda_{n}\big)^{\frac{1}{2}}
\end{split}
\end{equation}
It follows from (3) and $0\leq1-b_{t_0}(\varepsilon_0\varphi)\leq\mathbbm{1}_{\{\varphi<-t_{0}\}}$ that
\begin{equation}
\begin{split}
&\big(\sum\limits_{k=1}^{k_0}\int_{D}|F_k|^2d\lambda_{n}\big)^{\frac{1}{2}}\\
\leq&\big(\frac{1}{\varepsilon_0}\sum\limits_{k=1}^{k_0}\int_{D}\mathbbm{1}_{\{-(t_{0}+1)<\varphi<-t_{0}\}}|e_k|^{2}
 e^{(\varepsilon_0+1)(t_0+1)}d\lambda_{n}\big)^{\frac{1}{2}}+\big(\sum\limits_{k=1}^{k_0}
 \int_{D}\mathbbm{1}_{\{\varphi<-t_{0}\}}|e_k|^2d\lambda_{n}\big)^{\frac{1}{2}}\\
\leq&\big((\frac{e^{(\varepsilon_0+1)(t_0+1)}}{\varepsilon_0})^{\frac{1}{2}}+1\big)\big(\sum\limits_{k=1}^{k_0}
 \int_{D}\mathbbm{1}_{\{\varphi<-t_{0}\}}|e_k|^2d\lambda_{n}\big)^{\frac{1}{2}}.
\end{split}
\end{equation}

By Lemma \ref{G-Z}, we know that
$(F_k-e_k,o)\in\mathscr{I}((1+\varepsilon_0)\varphi)_o\subset\mathscr{I}(\varphi)_o\subset I$ and $(F_k,o)\in I$.

Hence, we have
$$F_k=\sum\limits_{j=1}^{\infty}a_k^je_j, \quad a_k^j\in\mathbb{C},\ 1\le k\le k_0,$$
and $$\int_{D}|F_k|^2d\lambda_{n}=\sum\limits_{j=1}^{\infty}|a_k^j|^2,\quad 1\le k\le k_0.$$

By Lemma \ref{bergman}, the following holds on $U_0$,
\begin{equation}
\begin{split}
&\big(\sum\limits_{k=1}^{k_0}|F_k-e_k|^2\big)^{\frac{1}{2}}
\geq\big(\sum\limits_{k=1}^{k_0}|e_k|^{2}\big)^{\frac{1}{2}}-\big(\sum\limits_{k=1}^{k_0}|F_k|^2\big)^{\frac{1}{2}}\\
\geq&\big(\frac{1}{C_0}\big)^{\frac{1}{2}}\big(\sum\limits_{k=1}^{\infty}|e_k|^{2})\big)^{\frac{1}{2}}-\big(\sum\limits_{k=1}^{k_0}
(\sum\limits_{j=1}^{\infty}|a_k^j|^2)\big)^{\frac{1}{2}}\big(\sum\limits_{k=1}^{\infty}|e_k|^2\big)^{\frac{1}{2}}\\
=&\big((\frac{1}{C_0})^{\frac{1}{2}}-(\sum\limits_{k=1}^{k_0}\int_{D}|F_k|^2d\lambda_{n})^{\frac{1}{2}}\big)
 \big(\sum\limits_{k=1}^{\infty}|e_k|^2\big)^{\frac{1}{2}}\\
\geq&\big((\frac{1}{C_0})^{\frac{1}{2}}-((\frac{e^{(\varepsilon_0+1)(t_0+1)}}{\varepsilon_0})^{\frac{1}{2}}+1)
 (\sum\limits_{k=1}^{k_0}\int_{D}\mathbbm{1}_{\{\varphi<-t_{0}\}}|e_k|^2d\lambda_{n})^{\frac{1}{2}}\big)
 \big(\sum\limits_{k=1}^{\infty}|e_k|^2\big)^{\frac{1}{2}}.
\end{split}
\end{equation}

Denote by
$$A:=(\frac{1}{C_0})^{\frac{1}{2}}-((\frac{e^{(\varepsilon_0+1)(t_0+1)}}{\varepsilon_0})^{\frac{1}{2}}+1)
(\sum\limits_{k=1}^{k_0}\int_{D}\mathbbm{1}_{\{\varphi<-t_{0}\}}|e_k|^2d\lambda_{n})^{\frac{1}{2}}.$$
Since $C_{\varepsilon_0}(\varphi)>0$ and
$$A{\cdot}C_{\varepsilon_0}(\varphi)=\big(\frac{e^{(\varepsilon_0+1)(t_0+1)}}{\varepsilon_{0}}\sum\limits_{k=1}^{k_0}
\int_{D}\mathbbm{1}_{\{\varphi<-t_{0}\}}|e_k|^2d\lambda_{n}\big)^{\frac{1}{2}}>0,$$
it follows that $A>0$.

Then from (6) we obtain
\begin{equation}
\begin{split}
&A^2\cdot\big(\int_{\{\varphi<-(t_0+1)\}{\cap}U_0}(\sum\limits_{k=1}^{\infty}|e_k|^2)e^{-\varphi}d\lambda_{n}\big)\\
\leq&\int_{\{\varphi<-(t_0+1)\}{\cap}U_0}(\sum\limits_{k=1}^{k_0}|F_k-e_k|^2)e^{-\varphi}d\lambda_{n}\\
=&\sum\limits_{k=1}^{k_0}\int_{\{\varphi<-(t_0+1)\}{\cap}U_0}|F_k-e_k|^2e^{-\varphi}d\lambda_{n}.
\end{split}
\end{equation}

Note that
$$\sum\limits_{k=1}^{k_0}|F_k-(1-b_{t_0}(\varepsilon_0\varphi))e_k|^{2}\big|_{\{\varphi<-(t_0+1)\}{\cap}U_0}
=\sum\limits_{k=1}^{k_0}|F_k-e_k|^{2}.$$
It follows from Lemma \ref{G-Z} that
\begin{equation}
\begin{split}
&\sum\limits_{k=1}^{k_0}\int_{\{\varphi<-(t_0+1)\}{\cap}U_0}|F_k-e_k|^2e^{-\varphi}d\lambda_{n}\\
\leq&\sum\limits_{k=1}^{k_0}\int_{D}|F_k-(1-b_{t_0}(\varepsilon_0\varphi))e_k|^{2}e^{-\varphi}d\lambda_{n}\\
\leq&\frac{1}{\varepsilon_{0}}\sum\limits_{k=1}^{k_0}\int_{D}\mathbbm{1}_{\{-(t_{0}+1)<\varphi<-t_{0}\}}|e_k|^{2}
 e^{(\varepsilon_0+1)(t_0+1)}d\lambda_{n}\\
\leq&\frac{e^{(\varepsilon_0+1)(t_0+1)}}{\varepsilon_{0}}\sum\limits_{k=1}^{k_0}\int_{D}\mathbbm{1}_
 {\{\varphi<-t_{0}\}}|e_k|^{2}d\lambda_{n}.
\end{split}
\end{equation}

Combining inequalities (7) and (8), we have
\begin{equation}
\begin{split}
&\int_{\{\varphi<-(t_0+1)\}{\cap}U_0}(\sum\limits_{k=1}^{\infty}|e_k|^2)e^{-\varphi}d\lambda_{n}\\
\leq&\frac{1}{A^2}\cdot\frac{e^{(\varepsilon_0+1)(t_0+1)}}{\varepsilon_{0}}\sum\limits_{k=1}^{k_0}\int_{D}\mathbbm{1}_
 {\{\varphi<-t_{0}\}}|e_k|^{2}d\lambda_{n}\\
=&\big[(\frac{e^{(\varepsilon_0+1)(t_0+1)}}{\varepsilon_0}C_0\sum\limits_{k=1}^{k_0}\int_{D}\mathbbm{1}_
 {\{\varphi<-t_{0}\}}|e_k|^{2}d\lambda_{n})^{-\frac{1}{2}}-(1+(\frac{e^{(\varepsilon_0+1)(t_0+1)}}{\varepsilon_0})^
 {-\frac{1}{2}})\big]^{-2}\\
=&C^2_{\varepsilon_0}(\varphi).
\end{split}
\end{equation}

\hfill $\Box$

\begin{rem} \label{remark}
If $I=\mathscr{I}(\varphi)_o=\mathscr{I}((1+\varepsilon_0)\varphi)_o$ in the above theorem and $$\sum\limits_{k=1}^{k_0}\int_D|e_k|^2e^{-(1+\varepsilon_0)\varphi}d\lambda_{n}<\infty,$$
then for any $\varepsilon_1,\ \varepsilon_2>0$, there exists $t_0\gg0$ such that
$$\sum\limits_{k=1}^{k_0}\int_D\mathbbm{1}_{\{-(t_0+1)<\varphi<-t_0\}}|e_k|^2e^{(\varepsilon_0+1)(t_0+1)}d\lambda_{n}<\varepsilon_1$$
and $$\sum\limits_{k=1}^{k_0}\int_{D}|(1-b_{t_0}(\varepsilon_0\varphi))e_k|^2d\lambda_{n}<\varepsilon_2.$$
Furthermore, we can get
$$\int_{\{\varphi<-(t_0+1)\}{\cap}U_0}(\sum\limits_{k=1}^{\infty}|e_k|^2)e^{-\varphi}d\lambda_{n}\leq\big((\frac{\varepsilon_1}
{\varepsilon_0}\cdot C_0)^{-1/2}-(1+(\frac{\varepsilon_1}{\varepsilon_0\varepsilon_2})^{-1/2})\big)^{-2}.$$
\end{rem}

\medskip

\noindent{\textbf{\emph{Proof of Corollary} \ref{cor1}.} By H\"older inequality, we have
\begin{equation*}
\begin{split}
&\int_U|F|^2e^{-(1+\varepsilon_0/2)\tilde\varphi}d\lambda_n\\
&\leq\big(\int_U|F|^2e^{-(1+\varepsilon_0)\varphi}d\lambda_n\big)^{\frac{1+\varepsilon_0/2}{1+\varepsilon_0}}
\big(\int_U|F|^2e^{-\log|z|/2}d\lambda_n\big)^{\frac{\varepsilon_0/2}{1+\varepsilon_0}},
\end{split}
\end{equation*}
which implies $\mathscr{I}((1+\varepsilon_0)\varphi)_o\subset\mathscr{I}((1+\varepsilon_0/2)\tilde\varphi)_o\subset
\mathscr{I}(\tilde\varphi)_o\subset\mathscr{I}(\varphi)_o$,
i.e., $$\mathscr{I}((1+\varepsilon_0/2)\tilde\varphi)_o=\mathscr{I}(\tilde\varphi)_o=\mathscr{I}(\varphi)_o.$$
As
$$\sum\limits_{k=1}^{k_0}\int_{D}|e_k|^{2}e^{-(1+\varepsilon_0)\varphi}d\lambda_{n}<\infty,$$
there exists $t_0{\gg}0$ such that $0{<}C_{\varepsilon_0/2}(\tilde\varphi){<}\sqrt{M}/2$, and $0<C_{\varepsilon_0/2}(\tilde\psi)\leq2\cdot C_{\varepsilon_0/2}(\tilde\varphi)$ by $(*)$.

Since $\tilde\psi\leq\frac{\varepsilon_0/2}{(1+\varepsilon_0)(1+\varepsilon_0/2)}\log|z|$ on $D$, we have
$$\{|z|<e^{-\frac{(1+\varepsilon_0)(1+\varepsilon_0/2)(t_0+1)}{\varepsilon_0/2}}\}\subset\{\tilde\psi<-(t_0+1)\}.$$
Then, we obtain that
\begin{equation*}
\begin{split}
&\int_{U_0\cap\{|z|<e^{-\frac{(1+\varepsilon_0)(1+\varepsilon_0/2)(t_0+1)}{\varepsilon_0/2}}\}}(\sum\limits_{k=1}^{\infty}
|e_k|^2)e^{-\tilde\psi}d\lambda_n\\
\leq&\int_{U_0\cap\{\tilde\psi<-(t_0+1)\}}(\sum\limits_{k=1}^{\infty}|e_k|^2)e^{-\tilde\psi}d\lambda_n
 \leq C_{\varepsilon_0/2}^2(\tilde\psi)<M.
\end{split}
\end{equation*}

\hfill $\Box$
\\
\noindent{\textbf{\emph{Proof of Corollary} \ref{cor2}.} As every sequence which is convergent in Lebesgue measure has a subsequence which is convergent almost everywhere, then it is sufficient to prove the result for the case that $\varphi_j$ is convergent to $\varphi$ almost everywhere.

By the truth of SOC, there exists $\varepsilon_0>0$ such that $\mathscr{I}(\varphi)=\mathscr{I}((1+\varepsilon_0)\varphi)$ on a neighborhood $D$ of $o$. Without loss of generality, we assume the unit ball $B(o;1)\supset D$.

Since $F_j$ is compactly convergent to a holomorphic function $F$, by shrinking $D$, we can assume that $\int_D|F_j|^2d\lambda_n$ is uniformly bounded. Let $e_k,\ 1\leq k\leq k_0,$ be as in Corollary \ref{cor1}. Then, we infer from $(F_j,o)\in\mathscr{I}(\varphi)_o$ and Lemma \ref{bergman} that there exist complex numbers $a_j^k$ such that $F_j=\sum\limits_{k=1}^{\infty}a_j^ke_k$ and $\sum\limits_{k=1}^{\infty}|a_{j}^{k}|^2=\int_D|F_j|^2d\lambda_n$ is uniformly bounded.

Since $\varphi_j$ is convergent to $\varphi$ almost everywhere, it follows from the dominated convergence theorem that
$$\sum\limits_{k=1}^{k_0}\int_{D}\mathbbm{1}_{\{\tilde{\varphi}_j<-t_{0}\}}|e_k|^{2}d\lambda_{n}\leq2\sum\limits_{k=1}^{k_0}
\int_{D}\mathbbm{1}_{\{\tilde\varphi<-t_{0}\}}|e_k|^{2}d\lambda_{n},$$
where $\tilde{\varphi_j}=\varphi_j+\frac{\varepsilon_0/2}{(1+\varepsilon_0)(1+\varepsilon_0/2)}\frac{\log|z|}{2}$.
By Corollary \ref{cor1}, there exists a neighborhood $V_0{\subset}{\subset}D$ of $o$ and $M>0$ such that $$\int_{V_0}\sum\limits_{k=1}^{\infty}|e_k|^{2}e^{-\varphi_j}d\lambda_{n}<M.$$

Let $\varepsilon\in(0,\varepsilon_0)$. Replacing $\varphi$ by $(1+\varepsilon/2)\varphi$ and $\varphi_j$ by $(1+\varepsilon/2)\varphi_j$, we have
$$\int_{\tilde V_0}\sum\limits_{k=1}^{\infty}|e_k|^{2}e^{-(1+\varepsilon/2)\varphi_j}d\lambda_{n}<\tilde M,$$
for some neighborhood $\tilde V_0\supset o$ and some constant $\tilde M$ which are independent of $\varphi_j$.
As $\sum\limits_{k=1}^{\infty}|a_{j}^{k}|^2$ is uniformly bounded, by Schwarz inequality, it follows that
$$\int_{\tilde V_0}|F_j|^2e^{-(1+\varepsilon/2)\varphi_j}d\lambda_{n}\leq\int_{\tilde V_0}(\sum\limits_{k=1}^{\infty}|a_{j}^{k}|^2)\cdot(\sum\limits_{k=1}^{\infty}|e_{k}|^2)e^{-(1+\varepsilon/2)\varphi_j}d\lambda_{n}$$
is uniformly bounded. Then, by Lemma \ref{weakcpt}, we obtain that
$F_je^{-\varphi_j}$ converges to $Fe^{-\varphi}$ as $j$ goes to infinity in the $L^1_{loc}$ norm on $\tilde V_0$.

Replacing $\varphi_j$ by $(1+\varepsilon_0)\varphi_j$, we obtain the second assertion from the first one.\hfill $\Box$

\section{Relation to semi-continuity of complex singularity exponents}

In \cite{D-K}, Demailly and Koll\'{a}r proved the following semi-continuity of complex singularity exponents.

\begin{thm} \emph{(Main Theorem 0.2, \cite{D-K}).} \label{D-K}
Let $X$ be a complex manifold, $K\subset X$ a compact subset and $\varphi$ a plurisubharmonic function on $X$. If $c<c_K(\varphi)$ and $(\varphi_j)$ is a sequence of plurisubharmonic functions on $X$ which is convergent to $\varphi$ in $L_{loc}^1$ norm, then $e^{-2c\varphi_j}$ converges to $e^{-2c\varphi}$ in $L^1$ norm over some neighborhood $U$ of $K$.
\end{thm}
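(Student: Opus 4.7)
The plan is to derive this classical Demailly–Koll\'ar semi-continuity statement from Corollary \ref{cor1} combined with the soft convergence criterion in Lemma \ref{weakcpt}. First I would reduce to a local statement: since $K$ is compact, cover it by finitely many coordinate balls and fix $o\in K$; it suffices to produce a neighborhood $U\ni o$ on which $e^{-2c\varphi_j}\to e^{-2c\varphi}$ in $L^1$. Pick an auxiliary exponent $c''\in(c,c_K(\varphi))$ and set $p:=c''/c>1$. Since $c''<c_K(\varphi)$, $e^{-2c''\varphi}$ is integrable near $o$, i.e.\ $(1,o)\in\mathscr{I}(2c''\varphi)_o=\mathcal{O}_o$, so the constant function $e_1\equiv 1$ serves as the single generator required by Corollary \ref{cor1}. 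Choose $\varepsilon_0>0$ small enough that $(1+\varepsilon_0)c''<c_K(\varphi)$ on a fixed bounded pseudoconvex $D\subset B(o;1)$; then $e_1$ satisfies the $L^2$-integrability hypothesis of Corollary \ref{cor1} with respect to the weight $2c''\varphi$.

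The decisive step is to apply Corollary \ref{cor1} with that weight and with $\psi$ equal to $2c''\varphi_j$. The inclusion $\mathscr{I}(2c''\varphi_j)_o\subset\mathcal{O}_o$ is automatic, so only condition $(*)$ needs checking. Extracting from $\varphi_j\to\varphi$ in $L^1_{loc}$ a subsequence converging a.e.\ gives $\tilde{\varphi}_j\to\tilde{\varphi}$ pointwise a.e., where tildes denote the logarithmic perturbation prescribed by Corollary \ref{cor1}. For any level $t_0$ outside the at-most-countable set of atoms of the distribution function of $\tilde{\varphi}$ one has $\lambda_n(\{\tilde{\varphi}=-t_0\})=0$, and dominated convergence then yields $\mathbbm{1}_{\{\tilde{\varphi}_j<-t_0\}}\to\mathbbm{1}_{\{\tilde{\varphi}<-t_0\}}$ in $L^1(D)$. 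Hence $(*)$ holds for all $j$ sufficiently large. Fixing such a $t_0$ large enough to trigger Corollary \ref{cor1} with $M=1$ produces a neighborhood $U\subset U_0$ of $o$ and a uniform constant $C'>0$ with
\begin{equation*}
\int_{U}e^{-2c''\varphi_j}\,d\lambda_n\ \leq\ \int_{U}e^{-\tilde{\psi}_j}\,d\lambda_n\ \leq\ C'\qquad\text{for all large }j,
\end{equation*}
the first inequality being $|z|^{-\alpha/2}\geq 1$ on $U\subset B(o;1)$, where $\alpha$ is the coefficient of $\log|z|$ in the definition of the tilde.

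Finally, since $\varphi_j\to\varphi$ in $L^1_{loc}$ forces convergence of $e^{-2c\varphi_j}$ to $e^{-2c\varphi}$ in Lebesgue measure on $U$, and since the previous display is exactly a uniform $L^p$-bound on $e^{-2c\varphi_j}$ with $p=c''/c>1$, Lemma \ref{weakcpt} applied to $f_j=e^{-2c\varphi_j}$ on $U$ delivers $\int_U|e^{-2c\varphi_j}-e^{-2c\varphi}|\,d\lambda_n\to 0$. Running the argument on every subsequence of $(\varphi_j)$ upgrades the conclusion from a subsequence to the full sequence, and compactness of $K$ then glues the local conclusions into an $L^1$ estimate on a single neighborhood of $K$.

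The main obstacle is genuinely the uniform $L^p$-bound: convergence in measure and finite $L^1$-bounds are not enough to control the exponential $e^{-2c\varphi_j}$, because a priori the Lelong numbers of the $\varphi_j$ at $o$ could jump upwards in the limit. This is precisely where the Ohsawa--Takegoshi/Guan--Zhou extension technology packaged into Lemma \ref{G-Z} and Corollary \ref{cor1} is indispensable: it converts the soft $L^1_{loc}$-hypothesis on $(\varphi_j)$, via the sublevel-set comparison $(*)$, into the hard uniform integrability $\sup_j\|e^{-2c''\varphi_j}\|_{L^1(U)}<\infty$, with $c''$ strictly larger than $c$. Once this is in hand, the rest is standard real analysis.
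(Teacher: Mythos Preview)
Your argument is correct and follows essentially the same path as the paper: the paper simply observes that Theorem~\ref{D-K} is the special case $\mathscr{I}(\varphi)_o=\mathcal{O}_o$, $F_j=F\equiv 1$ of Corollary~\ref{cor2}, while you unpack the proof of that corollary in this special case by combining Corollary~\ref{cor1} with Lemma~\ref{weakcpt} directly. Your treatment of condition~$(*)$---choosing $t_0$ outside the countable set of atoms of the distribution function of $\tilde\varphi$---is in fact slightly more careful than the corresponding step in the paper's own proof of Corollary~\ref{cor2}.
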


Indeed, by subtracting a constant, we can assume $\varphi$ is negative on $K$. As $\int_K\varphi_jd\lambda_n\leq\int_K|\varphi-\varphi_j|d\lambda_n+\int_K\varphi d\lambda_n$, we obtain that $\varphi_j$ is also negative on $K$. Then, Theorem \ref{D-K} is a special case of Corollary \ref{cor2} when $\mathscr{I}(\varphi)_o=\mathcal{O}_o$. With additional condition $\varphi_j\leq\varphi$, Theorem \ref{D-K} can be referred to \cite{Pham} for multiplier ideals, which is also a special case of Corollary \ref{cor2}.

If $\varphi=\log|g|$ for $J$-vector holomorphic functions $g(z,c)=(g_1,...,g_J)$ on a polydisk $\Delta^{n}\times\Delta$, then it follows that

\begin{thm} \emph{(Main Theorem, \cite{P-S}).}
Assume that $\int_{\Delta^{n}}|g(z,0)|^{-\delta}<\infty.$
Then there exists a smaller polydisk $\Delta'^{n}\times\Delta'$ so that the function
$c\mapsto\int_{\Delta^{'n}}|g(z,c)|^{-\delta}$ is finite and continuous for $c\in\Delta'$.
\end{thm}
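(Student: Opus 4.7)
The plan is to derive this statement from Corollary \ref{cor2} applied to the family of plurisubharmonic weights $\varphi_c(z):=\delta\log|g(z,c)|$, with $F_j\equiv F\equiv 1$. First I would multiply $g$ by a small positive constant (which only rescales the integral by a harmless factor) so that $\varphi_c$ is negative on $\Delta^n$ for every $c$ in a neighborhood of $0$; this is legitimate because $|g|$ is bounded on compact subsets of $\Delta^n\times\Delta$. Since $\log|g(z,c)|$ is plurisubharmonic in $(z,c)$, each slice $\varphi_c$ is plurisubharmonic in $z$. The hypothesis $\int_{\Delta^n}|g(z,0)|^{-\delta}\,d\lambda_n<\infty$ is exactly $e^{-\varphi_0}\in L^1(\Delta^n)$, which forces local integrability of $e^{-\varphi_0}$ at every point $p\in\Delta^n$ and hence $\mathscr{I}(\varphi_0)_p=\mathcal{O}_p$ everywhere.

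Fix a closed polydisk $\overline{\Delta'^n}\subset\Delta^n$ and let $c_j\to c_0$ be an arbitrary sequence in a small disk $\Delta'\subset\Delta$. At each $p\in\overline{\Delta'^n}$ I would verify the hypotheses of Corollary \ref{cor2}: the $\varphi_{c_j}$ are negative plurisubharmonic; uniform convergence $g(\cdot,c_j)\to g(\cdot,c_0)$ on compacts yields $\varphi_{c_j}\to\varphi_{c_0}$ almost everywhere (off the pluripolar set $\{g(\cdot,c_0)=0\}$) and so in Lebesgue measure; the inclusion $\mathscr{I}(\varphi_{c_j})_p\subset\mathscr{I}(\varphi_{c_0})_p=\mathcal{O}_p$ is trivial; and $F_j\equiv 1$ is compactly convergent with $(1,p)\in\mathcal{O}_p$. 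Corollary \ref{cor2} then supplies a neighborhood $U_p$ of $p$ on which $|g(z,c_j)|^{-\delta}\to|g(z,c_0)|^{-\delta}$ in $L^1(U_p)$. Passing to a finite subcover $\overline{\Delta'^n}\subset U_{p_1}\cup\cdots\cup U_{p_N}$ combines these into
\[
\int_{\Delta'^n}|g(z,c_j)|^{-\delta}\,d\lambda_n\longrightarrow \int_{\Delta'^n}|g(z,c_0)|^{-\delta}\,d\lambda_n,
\]
so $I(c):=\int_{\Delta'^n}|g(z,c)|^{-\delta}\,d\lambda_n$ is continuous at every $c_0\in\Delta'$ once we know $I(c_0)<\infty$. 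Applying this first with $c_0=0$ forces $I(c)$ to stay bounded on a neighborhood of $0$ (otherwise some sequence $c_j\to 0$ with $I(c_j)\to\infty$ would contradict the displayed convergence); shrinking $\Delta'$ accordingly produces finiteness throughout $\Delta'$, which then bootstraps by the same argument to continuity at every point of $\Delta'$.

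The principal technical obstacle is promoting Corollary \ref{cor2}'s local $L^1$-convergence near each $p$ to a uniform-in-$c$ bound on $I(c)$ over $\Delta'$. This is exactly why the finite subcover step is needed: summing finitely many local $L^1$-convergences delivers $\lim_j I(c_j)=I(c_0)$ for every sequence $c_j\to c_0$, which simultaneously controls $I$ on a neighborhood of $0$ and propagates finiteness across $\Delta'$. The remaining verifications—plurisubharmonicity of each slice, negativity after the initial rescaling, and Lebesgue-measure convergence of $\log|g(z,c_j)|$—are routine given the holomorphic dependence of $g$ on $(z,c)$.
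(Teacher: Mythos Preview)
Your proposal is correct and follows essentially the same route as the paper: the paper observes that Theorem~\ref{D-K} (Demailly--Koll\'ar) is the special case of Corollary~\ref{cor2} with $\mathscr{I}(\varphi)_o=\mathcal{O}_o$, and then remarks in one line that taking $\varphi=\log|g(\cdot,c)|$ yields the Phong--Sturm theorem; you apply Corollary~\ref{cor2} directly with $\varphi_c=\delta\log|g(\cdot,c)|$ and $F_j\equiv 1$, supplying the finite-subcover and bootstrap details that the paper leaves implicit. One small technical point worth tightening: run the first pass (at $c_0=0$) over a slightly larger closed polydisk $\overline{\Delta''^{\,n}}\supset\overline{\Delta'^{\,n}}$ so that the resulting finiteness of $\int_{\Delta''^{\,n}}e^{-\varphi_{c_0}}$ guarantees $\mathscr{I}(\varphi_{c_0})_p=\mathcal{O}_p$ even at boundary points $p\in\partial\Delta'^{\,n}$ in the bootstrap step.
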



\end{document}